\def\Alg{\mathsf{Alg}}
\newcommand{\defn}{\ensuremath{:  =}}
\def\bn{\big\|}
\def\R{\mathbb{R}}
\def\Pr{\mathbb{P}}
\def\<{\left\langle} 
\def\>{\right\rangle}
\newtheorem{theorem}{Theorem}[section]
\newtheorem{proposition}{Proposition}[section]
\newtheorem{remark}{Remark}[section]
\let\oldremark\remark
\renewcommand{\remark}{\oldremark\normalfont}
\def\Path{\mathsf{PaTh}}
\def\Alg{\mathsf{Alg}}
\newcommand{\argmin}{\operatornamewithlimits{argmin}}
\def\bn{\big\|}
\begin{document}

%
\runningtitle{Path Thresholding: Asymptotically Tuning-Free
High-Dimensional Sparse Regression}

%

\twocolumn[

\aistatstitle{Path Thresholding: Asymptotically Tuning-Free \\
High-Dimensional Sparse Regression}

\aistatsauthor{ Divyanshu Vats and Richard G. Baraniuk }

\aistatsaddress{ Rice University }
]

\begin{abstract}

In this paper, we address the challenging problem of selecting tuning parameters for high-dimensional sparse regression.  We propose a simple and computationally efficient method, called path thresholding ($\Path$), that transforms \textit{any} tuning parameter-dependent sparse regression algorithm into an asymptotically tuning-free sparse regression algorithm.  More specifically, we prove that, as the problem size becomes large (in the number of variables and in the number of observations), $\Path$ performs accurate sparse regression, under appropriate conditions, without specifying a tuning parameter.  In finite-dimensional settings, we demonstrate that $\Path$ can alleviate the computational burden of model selection algorithms by significantly reducing the search space of tuning parameters.
\end{abstract}

\section{Introduction}

Sparse regression is a powerful tool used across several domains for estimating a sparse vector $\beta^*$ given linear observations $y = X \beta^* + w$, where $X$ is the known measurement matrix and $w$ is the observation noise.  Examples of applications include the analysis of gene expression data \cite{segal2003regression}, fMRI data \cite{varoquaux2012small}, and imaging data~\cite{CameraArray}.  Furthermore, sparse regression forms the basis of other important machine learning algorithms including dictionary learning \cite{olshausen1997sparse} and graphical model learning \cite{meinshausen2009lasso}.  

Several efficient algorithms now exist in the literature for solving sparse regression; see Lasso \cite{TibshiraniLasso1994}, OMP \cite{tropp2007signal}, and their various extensions \cite{ZouAdaptiveLasso2006,zhang2011adaptive,VatsBaraniukNIPS2013,VatsBaraniukSWAP2013}.  Several works have also analyzed the  conditions required for reliable identification of the sparse vector $\beta^*$; see \cite{buhlmann2011statistics} for a comprehensive review.  However, the performance of most sparse regression algorithms depend on a \textit{tuning parameter}, which in turn depends either on the statistics of the unknown noise in the observations or on the unknown sparsity level of the regression coefficients.  

Examples of methods to select tuning parameters include the Bayesian information criterion (BIC) \cite{schwarz1978estimating}, the Akaike information criterion (AIC) \cite{akaike1974new}, cross-validation (CV) \cite{geisser1975predictive,homrighausen2013lasso}, and methods based on minimizing the Stein's unbiased risk estimate (SURE)~\cite{donoho1995adapting,zou2007degrees,eldar2009generalized}.  All of the above methods are only suitable for low-dimensional settings, where the number of observations $n$ is much larger than the number of variables~$p$.  In high-dimensional settings, where $p > n$, all of the above methods typically overestimate the locations of the non-zero elements in $\beta^*$.  Although the overestimation could be empirically corrected using multi-stage algorithms \cite{ZouAdaptiveLasso2006,zhang2011adaptive,wasserman2009high,ZhangMulti2010}, this process is computationally demanding with no known theoretical guarantees for reliable estimation of $\beta^*$.  Stability selection \cite{meinshausen2010stability} is popular for high dimensional problems, but it is also computationally demanding.

In this paper, we develop an algorithm to select tuning parameters that is (i)~computationally efficient, (ii)~agnostic to the choice of the sparse regression method, and (iii)~asymptotically reliable.  Our proposed algorithm, called $\Path$, computes the solution path of a sparse regression method and thresholds a quantity computed at each point in the solution path.  We prove that, under appropriate conditions, $\Path$ is \textit{asymptotically tuning-free}, i.e., when the problem size becomes large (in the number of variables $p$ and the number of observations $n$), $\Path$ reliably estimates the location of the non-zero entries in $\beta^*$ independent of the choice of the threshold.  We compare $\Path$ to algorithms in the literature that use the Lasso to jointly estimate $\beta^*$ and the noise variance \cite{belloni2011square,stadler2011,sun2012scaled}.  We compliment our theoretical results with numerical simulations and also demonstrate the potential benefits of using $\Path$ in finite-dimensional settings.  

The rest of the paper is organized as follows.  Section~\ref{sec:ps} formulates the sparse regression problem.  Section~\ref{sec:path} presents the $\Path$ algorithm.  Section~\ref{sec:tuningfree} proves the asymptotic tuning-free property of $\Path$.  Section~\ref{sec:compare} compares $\Path$ to scaled Lasso \cite{sun2012scaled}, which is similar to square-root Lasso \cite{belloni2011square}.  Section~\ref{sec:numsim} presents numerical simulations on real data.  Section~\ref{sec:conclude} summarizes the paper and outlines some future research directions.

\section{Problem Formulation}
\label{sec:ps}

In this section, we formulate the sparse linear regression problem.  We assume that the observations $y \in \R^n$ and the measurement matrix $X \in \R^{n \times p}$ are known and related to each other by the linear model
\begin{equation}
y = X \beta^* + w \,, \label{eq:linmodel}
\end{equation}
where $\beta^* \in \R^p$ is the \textit{unknown sparse regression vector} that we seek to estimate.  We assume the following throughout this paper:
\begin{enumerate}[({A}1)]
\item The matrix $X$ is fixed with normalized columns, i.e., $\|X_i\|_2^2/n = 1$ for all $i \in \{1,2,\ldots,p\}$.
\item The entries of $w$ are i.i.d.\ zero-mean Gaussian random variables with variance $\sigma^2$.
\item  The vector $\beta^*$ is $k$-sparse with support set $S^* = \{j:\beta^*_j \ne 0\}$.
Thus, $|S^*| = k$.
\item The number of observations $n$ and the sparsity level $k$ are all allowed to grow to infinity as the number of variables $p$ grows to infinity.  In the literature, this is referred to as the \textit{high-dimensional framework}.
\end{enumerate}
For any set $S$, we associate a loss function, ${\cal L}(S;y,X)$, which is the cost associated with estimating $S^*$ by the set $S$.  An appropriate loss function for the linear problem in (\ref{eq:linmodel}) is the least-squares loss, which is defined as
\begin{align}
{\cal L}(S;y,X) \defn \min_{\alpha \in \R^{|S|}} \|y - X_{S} \alpha \|_2^2 = \bn \Pi^{\perp}[S] y \bn_2^2 \,, \label{eq:ls}
\end{align}
where $X_S$ is an $n \times |S|$ matrix that only includes the columns indexed by $S$ and $\Pi^{\perp}[S] = I - \Pi[S] = I - X_S ( X_S^T X_S)^{-1} X_S^T$ is the orthogonal projection onto the kernel of the matrix $X_S$.

In this paper, we mainly study the problem of estimating $S^*$, since, once $S^*$ has been estimated, an estimate of $\beta^*$ can be easily computed by solving a constrained least-squares problem.  Our main goal is to devise an algorithm for estimating $S^*$ that is \textit{asymptotically tuning-free} so that when $p \rightarrow \infty$, $S^*$ can be estimated with high probability without specifying a tuning parameter.

\section{Path Thresholding ($\Path$)}
\label{sec:path}

In this section, we develop $\Path$.  Recall that we seek to estimate the support of a sparse vector $\beta^*$ that is observed through $y$ and $X$ by the linear model in (\ref{eq:linmodel}).  Let $\Alg$ be a generic sparse regression algorithm with the following structure:
\begin{align}
\widehat{S}_s &= \Alg(y,X,s) \label{eq:alg}\\
y,X&= \text{Defined in (\ref{eq:linmodel})} \nonumber \\
s&= \text{Desired sparsity level} \nonumber\\
\widehat{S}_s &= \text{Estimate of $S^*$ s.t. $|\widehat{S}_s| = s$} \,. \nonumber
\end{align}
The tuning parameter in $\Alg$ is the sparsity level $s$, which is generally unknown.  All standard sparse regression algorithms can be defined using $\Alg$ with appropriate modifications.  For example, algorithms based on sparsity, including OMP \cite{tropp2007signal}, marginal regression \cite{fan2008sure,genovese2012comparison}, FoBa \cite{zhang2011adaptive}, and CoSaMP \cite{needell2009cosamp}, can be  written as (\ref{eq:alg}).  Algorithms based on real valued tuning parameters, such as the Lasso \cite{TibshiraniLasso1994}, can be written as (\ref{eq:alg}) after mapping the tuning parameter to the sparsity level.  We refer to Remark~\ref{rem:realRegul} for more details about this transformation.

\subsection{Motivating Example}

\begin{figure}
\begin{center}
\includegraphics[scale=0.52]{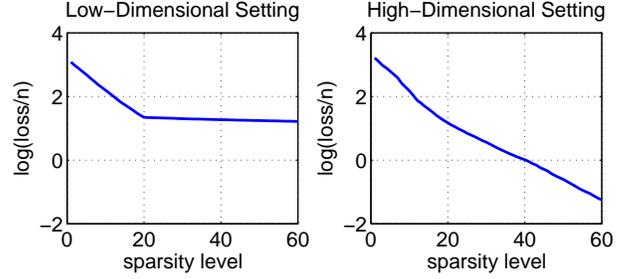}
\end{center}
\caption{Plot of a function of the loss versus the sparsity level when using the forward-backward (FoBa) algorithm in the low-dimensional setting (left) and the high-dimensional setting (right).}
\label{fig:path}
\end{figure}

Before presenting $\Path$, we discuss an example to illustrate the challenges in selecting tuning parameters for high-dimensional problems.  Let $p = 1000$, $k = 20$, and $X_{ij} \sim {\cal N}(0,\sigma^2)$, where $\sigma = 2$.  Furthermore, let the non-zero entries in $\beta^*$ be sampled uniformly between $[0.5,1.0]$.  Figure~\ref{fig:path} plots the $\log(\text{loss}/n)$ versus the sparsity level.  The loss at sparsity level $s$ is equal to $\|\Pi^{\perp}[\widehat{S}_s] y \|_2^2$ and $\widehat{S}_s$ is computed using the Forward-Backward (FoBa) sparse regression algorithm \cite{zhang2011adaptive}.   We refer to the sequence of estimates $\widehat{S}_1,\widehat{S}_2, \ldots, \widehat{S}_s$ as the \textit{solution path} of FoBa.  Furthermore, we consider a low-dimensional setting ($n = 2000$) and a high-dimensional setting ($n = 200$).  In both settings, $\widehat{S}_k = S^*$, i.e., FoBa outputs the true support.  

In the low-dimensional setting, we clearly see that Figure~\ref{fig:path} has a visible change at the sparsity level $s = 20$.  This suggests that the unknown sparsity  level could be inferred by appropriately thresholding some quantity computed over the solution path of a sparse regression algorithm.  However, in the high-dimensional setting, no such change is visible.  Thus, it is not clear if the unknown sparsity could be detected from the solution path.  As it turns out, we show in the next Section that an appropriate algorithm could be devised on the solution path to infer the sparsity level in an asymptotically reliable manner.

\subsection{Overview of $\Path$}
\SetNlSty{normal}{}{}
\IncMargin{1em}
\begin{algorithm}[h]
\caption{ Path Thresholding ($\Path$) }
\label{alg:path}
\smallskip
\hspace{-1em} {\textit{Inputs:} Observations $y$, measurement matrix $X$, and a parameter $c$} \\
\nl \For{$s = 0,1,2,\ldots,\min\{n,p\}$}{
\nl $\widehat{S}_s \gets \Alg(y,X,s)$ \\
\nl $\widehat{\sigma}^2_s \gets \| \Pi^{\perp}[\widehat{S}_{s}] y \|_2^2 /n$ \\
\nl $\displaystyle{\Delta_s \gets \max_{j \in (\widehat{S}_s)^c} \left\{  \| \Pi^{\perp}[\widehat{S}_s] y \|_2^2 - \|\Pi^{\perp}[\widehat{S}_s \cup j] y \|_2^2 \right\}}$ \\
\nl \If{ $\Delta_s < 2  c  \widehat{\sigma}^2_s \log p$}{
\nl Return $\widehat{S} = \widehat{S}_s$. \\
}}
\end{algorithm}
Algorithm~\ref{alg:path} presents path thresholding ($\Path$) that uses the generic sparse regression algorithm $\Alg$ in (\ref{eq:alg}) to estimate $S^*$.  Besides $y$ and $X$, the additional input to $\Path$ is a parameter $c$.  We will see in Section~\ref{sec:tuningfree} that as $p \rightarrow \infty$, under appropriate conditions, $\Path$ reliably identifies the true support as long as $c > 1$.

From Algorithm~\ref{alg:path}, it is clear that $\Path$ evaluates $\Alg$ for multiple different values of $s$, computes $\Delta_s$ defined in Line~4, and stops when $\Delta_s$ falls bellow a threshold.  The quantity $\Delta_s$ is the \textit{maximum} possible decrease in the loss when adding an additional variable to the support computed at sparsity level $s$.  The threshold in Line~5 of Algorithm~\ref{alg:path} is motivated from the following proposition.
\begin{proposition}
\label{prop:simple}
Consider the linear model in (\ref{eq:linmodel}) and assume that (A1)--(A4) holds.
If $\widehat{S}_k = S^*$, then $\Pr(\Delta_k < 2 c \sigma^2 \log p) \ge 1 - (p-k)/p^{c}$.
\end{proposition}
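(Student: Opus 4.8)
The plan is to use the hypothesis $\widehat S_k = S^*$ to strip the signal out of every loss term that appears in $\Delta_k$, leaving a maximum of rank-one Gaussian quadratic forms in the noise $w$, and then finish with a union bound and a standard Gaussian tail estimate. First I would observe that since $\beta^*$ is supported on $S^*$, the vector $X\beta^* = X_{S^*}\beta^*_{S^*}$ lies in $\mathrm{col}(X_{S^*}) \subseteq \mathrm{col}(X_{S^*\cup j})$ for every $j$, and $\Pi^{\perp}[S]$ annihilates $\mathrm{col}(X_S)$. Hence $\Pi^{\perp}[S^*]y = \Pi^{\perp}[S^*]w$ and $\Pi^{\perp}[S^*\cup j]y = \Pi^{\perp}[S^*\cup j]w$, so on the event $\widehat S_k = S^*$,
\[
\Delta_k \;=\; \max_{j \in (S^*)^c}\Big\{\,\|\Pi^{\perp}[S^*]w\|_2^2 - \|\Pi^{\perp}[S^*\cup j]w\|_2^2\,\Big\}.
\]

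Next I would identify each increment with a single squared Gaussian. For fixed $j \notin S^*$, the column spaces $\mathrm{col}(X_{S^*}) \subseteq \mathrm{col}(X_{S^*\cup j})$ are nested, so $\Pi^{\perp}[S^*] - \Pi^{\perp}[S^*\cup j] = \Pi[S^*\cup j] - \Pi[S^*]$ is the orthogonal projection onto the orthogonal complement of $\mathrm{col}(X_{S^*})$ inside $\mathrm{col}(X_{S^*\cup j})$, a subspace of dimension at most $1$. Writing this projector as $u_j u_j^T$ (with $u_j$ a unit vector, or $u_j = 0$ in the degenerate case $X_j \in \mathrm{col}(X_{S^*})$, which only makes $\Delta_k$ smaller), we get $\|\Pi^{\perp}[S^*]w\|_2^2 - \|\Pi^{\perp}[S^*\cup j]w\|_2^2 = \langle w, u_j u_j^T w\rangle = (u_j^T w)^2$. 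Since $w \sim \mc{N}(0,\sigma^2 I_n)$ and $\|u_j\|_2 \le 1$, the variable $Z_j \defn (u_j^Tw)/\sigma$ satisfies $Z_j \sim \mc{N}(0,1)$ (in the degenerate case the term is $0$), and the increment equals $\sigma^2 Z_j^2$.

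Finally I would apply a union bound over the $p-k$ indices in $(S^*)^c$ together with the Gaussian tail bound $\Pr(|Z| \ge t) \le e^{-t^2/2}$ for $Z \sim \mc{N}(0,1)$, evaluated at $t = \sqrt{2c\log p}$:
\[
\Pr\big(\Delta_k \ge 2c\sigma^2\log p\big) \;\le\; \sum_{j \in (S^*)^c} \Pr\big(Z_j^2 \ge 2c\log p\big) \;\le\; (p-k)\,e^{-c\log p} \;=\; \frac{p-k}{p^{c}},
\]
and taking complements gives the claim. The calculation is short, so there is no serious obstacle; the one point needing care is the structural fact that $\Pi^{\perp}[S^*] - \Pi^{\perp}[S^*\cup j]$ is a \emph{rank-one} orthogonal projector, which is what turns the increment into a scaled $\chi^2_1$ with exactly one degree of freedom (rather than a data-dependent number), and makes the per-index probability come out to precisely $p^{-c}$. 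The dependence among the $Z_j$'s plays no role since only a union bound is used.
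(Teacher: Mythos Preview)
Your argument is correct and matches the paper's own proof in substance: the paper rewrites $\Delta_s$ in the closed form $\max_{j}\,|X_j^T\Pi^{\perp}[\widehat S_s]y|^2/\|\Pi^{\perp}[\widehat S_s]X_j\|_2^2$ and observes that under $\widehat S_k=S^*$ this equals $\max_{j\in(S^*)^c}\|P_j w\|_2^2$ for rank-one projectors $P_j$, then invokes the Gaussian tail inequality and a union bound. Your nested-subspace derivation of the rank-one projector $u_ju_j^T$ is exactly the geometric content of that algebraic identity, so the two proofs coincide.
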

\begin{proof}
Using simple algebra, $\Delta_s$ in Line~4 of Algorithm~\ref{alg:path} can be written as
\begin{align}
\Delta_s = \max_{j \in (\widehat{S}_s)^c} \frac{|X_j^T \Pi^{\perp}[\widehat{S}_s] y |^2}{ \| \Pi^{\perp}[\widehat{S}_s] X_i \|_2^2} \,. \label{eq:deltas}
\end{align}
Under the conditions in the proposition, it is easy to see that 
$\Delta_{k} = \max_{j \in (S^*)^c} \| P_j w\|_2^2$, where $P_j$ is a rank one projection matrix.  The result follows from the Gaussian tail inequality and properties of projection matrices.
\end{proof}

Proposition~\ref{prop:simple} says that if $\sigma$ were known, then with high probability, $\Delta_k$ could be upper bounded by $2 c \sigma^2 \log p$, where $c$ is some constant.  Furthermore, under some additional conditions that ensure that $\Delta_s > 2c \sigma^2 \log p$ for $s < k$, Algorithm~\ref{alg:path} could estimate the unknown sparsity level with high probability.  For the marginal regression algorithm, the authors in \cite{genovese2012comparison} use a variant of this method to interchange the parameter $s$ and $\sigma^2$.

\begin{figure*}
\centering
\includegraphics[scale=0.6]{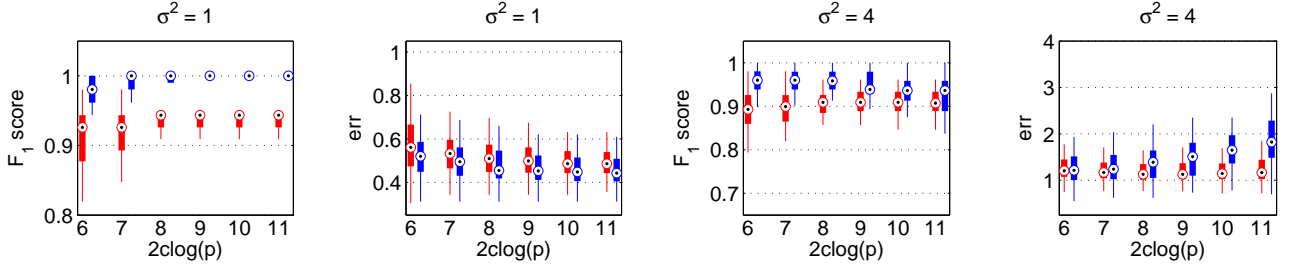}
\caption{An illustration of the performance of $\Path$ when $X \in \R^{216 \times 84}$ corresponds to stock returns data and $k = 30$.  The figures are box plots of error measures over $100$ trial.  Red is Lasso and blue is FoBa.  The first two plots show the $F_1$ score and the err when $\sigma^2 = 1$.  The last two plots show the $F_1$ score and err when $\sigma^2 = 4$.}
\label{fig:ill}
\end{figure*}

Since $\sigma$ is generally unknown, a natural alternative is to use an estimate of $\sigma$ to compute the threshold.  In $\Path$, we use an estimate of $\sigma$ computed from the loss at each solution of $\Alg$, i.e., $\widehat{\sigma}_s^2 = \| \Pi^{\perp}[\widehat{S}_s] y \|_2^2/n$.  Thus, for each $s$ starting at $s = 0$, $\Path$ checks if $\Delta_s \le 2  c  \widehat{\sigma}^2_s \log n$ and then stops the first time the inequality holds (see Line~5).  We can also use the estimate $\frac{\| \Pi^{\perp}[\widehat{S}_s] y \|_2^2}{n-s}$ with minimal change in performance since $s$ is generally much smaller than $n$.  Before illustrating $\Path$ using an example, we make some additional remarks regarding $\Path$.

\begin{remark}(Selecting the parameter $c$)
In Section~\ref{sec:tuningfree}, we prove that the performance of $\Path$ is independent of $c$  as $p\rightarrow \infty$ and $c > 1$.  However, in finite-dimensional settings, we need to set $c$ to an appropriate value.  Fortunately, the choice of $c$ is independent of the noise variance, which is \textit{not} the case for popular sparse regression algorithms like the Lasso, the OMP, and the FoBa.  In our numerical simulations, we observed that the performance of $\Path$ is insensitive to the choice of $c$ as long as $c \in [0.5,1.5]$.
\end{remark}

\begin{remark}(Computational Complexity)
Besides computing the solution path of $\Alg$, $\Path$ requires computing $\Delta_s$, which involves taking a maximum over at most $p$ variables.  For each $s$, assuming that that the residual $\Pi^{\perp}[\widehat{S}_s] y$ and the projection matrix $\Pi^{\perp}[\widehat{S}_s]$ is computed by the sparse regression algorithm, the additional complexity of $\Path$ is $O(n^2 p)$.  Furthermore, assuming that $\Path$ stops after computing $O(k)$ solutions, the total additional complexity of $\Path$ is $O(n^2 k p)$.  This complexity can be reduced to $O(n k p)$, with no change in the theoretical results that we present in Section~\ref{sec:tuningfree}, by modifying the $\Delta_s$ computations so that $\Delta_s = \max_{j \in (\widehat{S}_s)^c} | X_j^T \Pi^{\perp}[\widehat{S}_s] y |^2$.  This scaling of the additional complexity of $\Path$ is nearly the same as the complexity of computing the solution path of sparse regression algorithms.  For example, assuming that $p > n$, the solution path of the Lasso can be computed in time $O(n^2 p)$ using the LARS algorithm \cite{efron2004least}.
\end{remark}

\begin{remark}(Generalizing Algorithm~\ref{alg:path} to real valued tuning parameters)
\label{rem:realRegul}
We present $\Path$ in Algorithm~\ref{alg:path} using the sparsity level as a tuning parameter.  However, several sparse regression algorithms, such as the Lasso, depend on a real-valued tuning parameter.  One simple way to map such algorithms to the sparsity level is to compute the solution path, order the solutions in increasing level of sparsity, compute the loss at each solution, and finally select a solution at each sparsity level with minimal loss.  The last step ensures that there is a unique solution for each sparsity level.  We use this approach in all the implementations used in this paper.  Alternatively, depending on the sparse regression algorithm, we can also directly apply $\Path$ to the solution path.  For example, consider the Lasso that solves $\min_{\beta} [ \|y - X \beta \|_2^2/(2n) + s \| \beta \|_1 ]$.  As $s$ decreases, $|\widehat{S}_s|$ generally increases.  Thus, we can use Algorithm~\ref{alg:path} with the Lasso by simply replacing Line~1 with ``$\text{\textbf{for} }s = s_1,s_2,s_3,\ldots$", where $s_{i'} > s_{j'}$ for $i' < j'$.
\end{remark}

\subsection{Illustrative Example}
\label{subsec:illus}
In this section, we illustrate the performance of $\Path$.  We use data from \cite{choi2011learning} composed of $216$ observations of monthly stock returns from $84$ companies.  This results in a matrix $X$.  We let $k = 30$ and select a $\beta^*$ such that the non-zero entries in $\beta^*$ are uniformly distributed between $0.5$ and $1.5$.  We simulate two sets of observations; one using $\sigma^2 = 1$ and another one using $\sigma^2 = 4$.  We apply $\Path$ using Lasso \cite{TibshiraniLasso1994} and FoBa \cite{zhang2011adaptive}.  To evaluate the performance of an estimate $\widehat{\beta}$ with support $\widehat{S}$, we compute the $F_1$ score and the error in estimating $\beta^*$:
\begin{align}
F_1 \text{ score} &= 1/(1/\text{Recall} + 1/\text{Precision}) \,, \label{eq:f1}\\
\text{Precision} &= |S^* \cap \widehat{S}|/ |\widehat{S}| \,, \\
\text{Recall} &= |S^* \cap \widehat{S}|/ |S^*|  \,, \\
\text{err} &= \| \widehat{\beta} - \beta^* \|_2 \,. \label{eq:err}
\end{align}
Naturally, we want $F_1$ to be large (close to $1$) and $\text{err}$ to be small (close to $0$).  Figure~\ref{fig:ill} shows box plots of the error measures, where red corresponds to the Lasso and blue corresponds to FoBa.  The horizontal axis in the plots refer to the different choices of the threshold $2 c \log p$, where $c \in [0.6, 1.3]$.  For $\sigma$ small, we clearly see that the threshold has little impact on the final estimate.  For larger $\sigma$, we notice some differences, mainly for FoBa, as the threshold is varied.  Overall, this example illustrates that $\Path$ can narrow down the choices of the final estimate of $\beta^*$ or $S^*$ to a few estimates in a computationally efficient manner.  In the next section, we show that when $p$ is large, $\Path$ can accurately identify $\beta^*$ and $S^*$.

\section{Tuning-Free Property of $\Path$}
\label{sec:tuningfree}

In this section, we prove that, under appropriate conditions, $\Path$ is asymptotically tuning-free.  We state our result in terms of the generic sparse regression algorithm $\Alg$ defined in (\ref{eq:alg}).  For a constant $c > 1$, we assume that $\Alg$ has the following property:
\begin{enumerate}[(A5)]
\item $\Alg$ can reliably estimate the true support $S^*$, i.e., $\Pr(S^* = \Alg(y,X,k)) \ge 1 - 1/p^{c}$, where $c$ is the input to $\Path$.
\end{enumerate}
Assumption (A5), which allows us to separate the analysis of $\Path$ from the 
analysis of $\Alg$, says that $\Alg$ outputs the true support for $s = k$ with high probability.  
Under appropriate conditions, this property holds for all sparse regression algorithms under various conditions \cite{buhlmann2011statistics}.
The next theorem is stated in terms of the following two parameters:
\begin{align}
\beta_{\min} &= \min_{i \in S^*} | \beta_i | \,, \\
\rho_{2k} &= \min\left\{ \frac{\|X_A v \|_2^2}{n \|v\|_2^2} : S^* \subseteq A, v \in \R^{2k}  \right\} \,. \label{eq:rerhp}
\end{align}
The parameter $\beta_{\min}$ is the minimum absolute value over the non-zero entries in $\beta^*$.  The parameter $\rho_{2k}$, referred to as the restricted eigenvalue (RE), is the minimum eigenvalue over certain blocks of the matrix~$X^T X/n$.
\begin{theorem}
\label{thm:path}
Consider the linear model in (\ref{eq:linmodel}) and assume that (A1)--(A4) holds.  Let $\widehat{S}$ be the output of $\Path$ used with the sparse regression method $\Alg$ defined in (\ref{eq:alg}) that satisfies (A5).  Select $\epsilon > k/n + \sqrt{1/k}$ and $c > 1/(1-\epsilon)$.  For some constant $C > 0$, if
\begin{align}
n \ge \frac{2 c k \log p }{\rho_{2k}} + \frac{8 \sigma c \sqrt{k} \log p }{\beta_{\min} \rho_{2k}^2} + \frac{8 \sigma^2 c \log p}{\beta_{\min}^2 \rho_{2k}^2}\,, \label{eq:nn}
\end{align}
then $\Pr(\widehat{S} = S^*) \ge 1 - C p^{ 1 - (1-\epsilon) c } $.
\end{theorem}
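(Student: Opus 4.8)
The plan is to prove that $\Path$ first satisfies its stopping criterion at $s=k$ and that $\widehat{S}_k=S^*$. Two things are needed: (i) at $s=k$ the test $\Delta_k<2c\widehat{\sigma}_k^2\log p$ holds with high probability, and (ii) for every $s\in\{0,\ldots,k-1\}$ the test fails, i.e.\ $\Delta_s\ge 2c\widehat{\sigma}_s^2\log p$. By (A5) we may condition on the event $\{\widehat{S}_k=S^*\}$ at cost $1/p^c$, and throughout we use the form of $\Delta_s$ given in \eqref{eq:deltas}.

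For (i), on $\{\widehat{S}_k=S^*\}$ the computation behind Proposition~\ref{prop:simple} gives $\Delta_k=\max_{j\in(S^*)^c}(\widehat{u}_j^{T}w)^2$ with $\widehat{u}_j=\Pi^{\perp}[S^*]X_j/\|\Pi^{\perp}[S^*]X_j\|_2$ a \emph{deterministic} unit vector, so each $(\widehat{u}_j^{T}w)^2\sim\sigma^2\chi^2_1$; a Gaussian tail bound plus a union bound over the $p-k$ indices give $\Delta_k\le 2c(1-\epsilon)\sigma^2\log p$ except with probability $\lesssim p^{\,1-(1-\epsilon)c}$. This is the dominant error term and the reason for requiring $c>1/(1-\epsilon)$. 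Separately, $\widehat{\sigma}_k^2=\|\Pi^{\perp}[S^*]w\|_2^2/n\sim\sigma^2\chi^2_{n-k}/n$, and a lower-tail $\chi^2$ bound combined with the choice $\epsilon>k/n+\sqrt{1/k}$ (which leaves a $\Theta(n/k)$-sized gap between $(n-k)/n$ and $1-\epsilon$) gives $\widehat{\sigma}_k^2\ge(1-\epsilon)\sigma^2$ up to an exponentially small probability. Hence $\Delta_k<2c\widehat{\sigma}_k^2\log p$, and $\Path$ stops at $s=k$ unless it stopped earlier.

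For (ii), fix $s<k$. Since $|\widehat{S}_s|=s<k=|S^*|$, the set $U\defn S^*\setminus\widehat{S}_s$ is nonempty with $|U|\ge k-s$. Let $g\defn\Pi^{\perp}[\widehat{S}_s]X\beta^*=\Pi^{\perp}[\widehat{S}_s]X_U\beta^*_U$ be the not-yet-explained signal. Because $\|g\|_2=\min_{\supp(\tilde\alpha)\subseteq\widehat{S}_s}\|X(\beta^*-\tilde\alpha)\|_2$ and every competitor is supported on a set of size $\le 2k$ containing $S^*$ that agrees with $\beta^*$ on $U$, the restricted-eigenvalue bound \eqref{eq:rerhp} yields both $\|g\|_2^2\ge n\rho_{2k}\beta_{\min}^2|U|$ and $\|\beta^*_U\|_2^2\le\|g\|_2^2/(n\rho_{2k})$. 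Testing $X_U^{T}\Pi^{\perp}[\widehat{S}_s]y$ against $\beta^*_U$ and using idempotence of $\Pi^{\perp}[\widehat{S}_s]$ gives $(\beta^*_U)^{T}X_U^{T}\Pi^{\perp}[\widehat{S}_s]y=\|g\|_2^2+g^{T}w$; since $g$ lies in the range of the rank-$|U|$ projection $\Pi[\widehat{S}_s\cup S^*]-\Pi[\widehat{S}_s]$, a $\chi^2$ bound made uniform over the at most $p^{k}$ possible sets $\widehat{S}_s$ controls the cross term via $|g^{T}w|\le\|g\|_2\,\sigma\sqrt{C'k\log p}$, so under \eqref{eq:nn} (which forces $\|g\|_2$ well above the noise scale) this inner product is $\ge\tfrac12\|g\|_2^2$. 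Cauchy--Schwarz and $\|\Pi^{\perp}[\widehat{S}_s]X_j\|_2^2\le n$ then convert \eqref{eq:deltas} into
\[
\Delta_s \;\ge\; \frac{\|X_U^{T}\Pi^{\perp}[\widehat{S}_s]y\|_2^2}{n|U|} \;\ge\; \frac{\|g\|_2^4}{4\,n|U|\,\|\beta^*_U\|_2^2} \;\ge\; \frac{\rho_{2k}\|g\|_2^2}{4|U|} \;\ge\; \frac{n\rho_{2k}^2\beta_{\min}^2}{4}.
\]
On the event $\|w\|_2^2\le 2n\sigma^2$ we also have $\widehat{\sigma}_s^2\le 2\|g\|_2^2/n+4\sigma^2$, and comparing this with the two lower bounds above shows $\Delta_s\ge 2c\widehat{\sigma}_s^2\log p$ precisely when the three summands of \eqref{eq:nn} are in force (the first dominates the $\|g\|_2^2/n$ contribution to the threshold, the third the $\sigma^2$ contribution, and the second the signal--noise cross term). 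A union bound over $s=0,\ldots,k-1$ and over the $\le p^k$ subsets completes (ii).

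Combining (i), (ii), (A5) and the auxiliary $\chi^2$ and Gaussian events, $\Path$ does not stop before $s=k$ and does stop at $s=k$ with output $\widehat{S}=\widehat{S}_k=S^*$; summing the failure probabilities and absorbing the exponentially small ones gives $\Pr(\widehat{S}=S^*)\ge 1-Cp^{\,1-(1-\epsilon)c}$. I expect the crux to be part (ii): one has to control the data-dependent subspaces $\widehat{S}_s$ uniformly (the union bound over $p^k$ sets is exactly what inflates the required $n$ by the $\log p$ and $k\log p$ factors appearing in \eqref{eq:nn}) while tracking that the threshold $2c\widehat{\sigma}_s^2\log p$ itself carries the uncaptured-signal energy $\|g\|_2^2$; this is harmless only because $\Delta_s$ carries the same $\|g\|_2^2$ with a strictly larger prefactor.
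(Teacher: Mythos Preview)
Your outline for part~(i) is essentially the paper's argument: on $\{\widehat S_k=S^*\}$ one bounds $\Delta_k$ by a maximum of $p-k$ scaled $\chi_1^2$ variables and lower--bounds $\widehat\sigma_k^2$ via a $\chi^2_{n-k}$ lower tail, and the condition $\epsilon>k/n+\sqrt{1/k}$ enters exactly as you describe.

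For part~(ii) the paper takes a different route than you do. It expands $\Delta_s$ as in \eqref{eq:leftf1} and $n\widehat\sigma_s^2$ as in \eqref{eq:noiseb1}, subtracts, and then diagonalizes $X_{S^*\setminus\widehat S_s}^T\Pi^{\perp}[\widehat S_s]X_{S^*\setminus\widehat S_s}=M^TDM$ to lower--bound the deterministic part of the difference by $\bigl(n^2\rho_{2k}^2/(\tau_p k_s)-n\rho_{2k}\bigr)\|\beta^*_{S^*\setminus\widehat S_s}\|_2^2$; the three summands in \eqref{eq:nn} then emerge from matching this against the noise terms, each controlled by a single Gaussian or $\chi^2$ tail at level $\tau_p=2c\log p$. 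In particular, the factors $k$ and $\sqrt{k}$ in \eqref{eq:nn} come from $k_s\le k$ and from $\|\Pi^{\perp}[\widehat S_s]X\beta^*\|_2\le\sqrt{nk_s}\|\beta^*_{S^*\setminus\widehat S_s}\|_2$, not from any union bound over subsets; the paper applies \eqref{eq:dd1}--\eqref{eq:dd2} as if $\widehat S_s$ were fixed and only union--bounds over $s\in\{0,\dots,k-1\}$, paying $O(k/p^c)$.

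This is where your sketch has a real gap. Your uniform control $|g^Tw|\le\|g\|_2\,\sigma\sqrt{C'k\log p}$ over the $p^k$ possible sets is the honest thing to do, but it does \emph{not} close under \eqref{eq:nn}. To get the inner product $\ge\tfrac12\|g\|_2^2$ you need $\|g\|_2\ge 2\sigma\sqrt{C'k\log p}$, and in the worst case $|U|=1$ (which occurs at $s=k-1$) the RE bound only gives $\|g\|_2^2\ge n\rho_{2k}\beta_{\min}^2$, forcing $n\gtrsim \sigma^2 k\log p/(\rho_{2k}\beta_{\min}^2)$. None of the three summands in \eqref{eq:nn} supplies this extra factor of $k$ on the $\sigma^2/\beta_{\min}^2$ term (the third summand is $\Theta(\sigma^2\log p/(\beta_{\min}^2\rho_{2k}^2))$, the second $\Theta(\sigma\sqrt{k}\log p/(\beta_{\min}\rho_{2k}^2))$, and the first carries no $\sigma$). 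So either you must strengthen the hypothesis beyond \eqref{eq:nn}, or you must drop the $p^k$ union bound and, like the paper, treat $\widehat S_s$ as fixed when applying the Gaussian/$\chi^2$ tails. Your attribution of the $k\log p$ term in \eqref{eq:nn} to this union bound is therefore incorrect: that term arises purely from the deterministic comparison $n\rho_{2k}>\tau_p k_s$ in the eigendecomposition step.
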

The proof of Theorem~\ref{thm:path}, given in Appendix~\ref{sec:tuningfreeproof}, simply identifies sufficient conditions under which $\Delta_s \ge 2 c \log p$ for $s < k$ and $\Delta_k < 2 c \log p$.  We now make some remarks regarding Theorem~\ref{thm:path}.
\begin{remark}(Tuning-Free Property)
Recall from (A4) that $k,n \rightarrow \infty$ as $p \rightarrow \infty$.  Furthermore, based on the conditions on $n$ in (\ref{eq:nn}), it is clear that $\lim_{p \rightarrow \infty} (k/n + \sqrt{1/k}) = 0$.  This means that for any $c > 1$, if (\ref{eq:nn}) holds, then $\lim_{p \rightarrow \infty} \Pr(\widehat{S} = S^*) \rightarrow 1$.  This shows that $\Path$ is \textit{asymptotically tuning-free}.
\end{remark}

\begin{remark}({Numerical Simulation})
To illustrate the tuning-free property of $\Path$, we consider a simple numerical example.  Let $p = 1000$, $k = 50$, and $\sigma = 1$.  Assume that the non-zero entries in $\beta^*$ are drawn from a uniform distribution on $[1,2]$.  Next, randomly assign each non-zero entry either a positive or a negative sign.  The rows of the measurement matrix $X$ are drawn i.i.d.\ from ${\cal N}(0,\Sigma)$.  We consider two cases: $\Sigma = I$ and $\Sigma = 0.8I + 0.2\textbf{1}  \textbf{1}^T$, where $\textbf{1}$ is a vector of ones.  For both cases, we use $\Path$ with Lasso and FoBa.  Figure~\ref{fig:tuningfree} plots the mean $F_1$ score, defined in (\ref{eq:f1}),  and mean $\log(\text{err})$, defined in (\ref{eq:err}), over $100$ trials as $n$ ranges from $200$ to $1000$.  Note that $F_1 = 1$ corresponds to accurate support recovery.  For both Lasso and FoBa, we used $\Path$ with $c = 1$ and $c = 1.5$.  We clearly see that once $n$ is large enough, both algorithms, with different choices of $c$, lead to accurate support recovery.  This naturally implies accurate estimation of $\beta^*$.
\end{remark}

\begin{figure}
\centering
\includegraphics[scale=0.525]{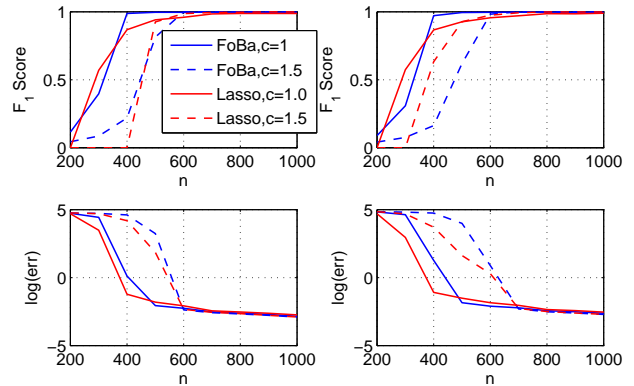}
\caption{ Mean $F_1$ score and mean $\log(\text{err})$ over $100$ trials when using $\Path$ with Lasso and FoBa with $p = 1000$, $k = 10$, $\sigma = 1$, and $\beta_{\min} \ge 1$. (left) $X$ is drawn from ${\cal N}(0,I)$ (right) $X$ is drawn from  $\sim {\cal N}(0, 0.8I + 0.2\textbf{1}  \textbf{1}^T)$.
}
\label{fig:tuningfree}
\end{figure}


\begin{remark}(Superset Recovery)
We have analyzed $\Path$ for reliable support recovery.  If $\Alg$ can only output a superset of $S^*$, which happens when using the Lasso under the restricted strong convexity condition \cite{bickel2009simultaneous,negahban2010unified}, then (A5) can be modified and the statement of Theorem~2 can be modified to reflect superset recovery.  The only change in (\ref{eq:nn}) will be to appropriately modify the definition of $\rho_{2k}$ in (\ref{eq:rerhp}).
\end{remark}

\begin{remark}(Extension to sub-Gaussian noise)
Our analysis only uses tail bounds for Gaussian and chi-squared random variables.  Hence, we can easily extend our analysis to sub-Gaussian noise using tail bounds for sub-Gaussian and sub-exponential random variables in \cite{VershyninRMT}.
\end{remark}

\begin{remark}(Scaling of $n$)
For an appropriate parameter $c_2$ that depends on $\rho_{2k}$, $\sigma$, and $\beta_{\min}$, (\ref{eq:nn}) holds if $n > c_2 k \log p$.  When using the Lasso, under appropriate conditions, in the most general case, $n > c_2' k \log p$ is sufficient for reliable support recovery.  Thus, both the Lasso and the $\Path$ require the same scaling on the number of observations.  An open question is to analyze the tightness of the condition (\ref{eq:nn}) and see if the dependence on $k$ in (\ref{eq:nn}) can be improved.  One way may be to incorporate prior knowledge that $k \ge k_{\min}$.  In this case, the scaling in (\ref{eq:nn}) can be improved to $n > c_2 (k-k_{\min}) \log p$.
\end{remark}

\section{Connections to Scaled Lasso}
\label{sec:compare}

\begin{figure*}
\includegraphics[scale=0.6]{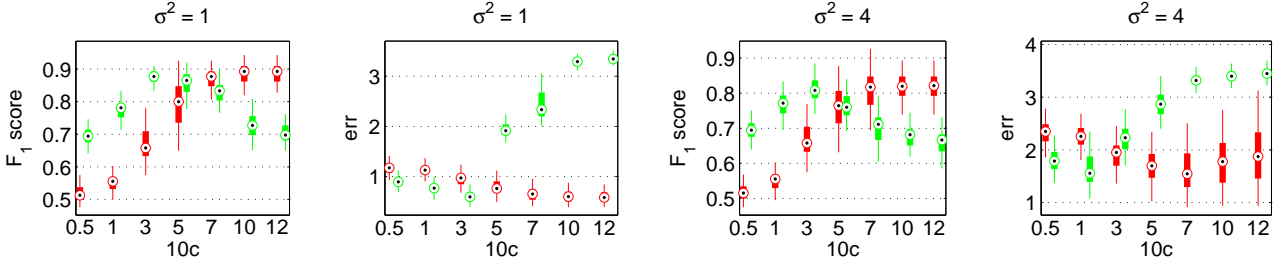}
\caption{A comparison of Lasso (blue) to scaled lasso (green).  See Figure~\ref{fig:ill} for details about the setup. }
\label{fig:comp}
\end{figure*}

In this Section, we highlight the connections between $\Path$ and the scaled Lasso algorithm proposed in \cite{sun2012scaled}.  Let $\widehat{\sigma}$ be an initial estimate of the noise variance.  Consider the following computations on the solution path until an equilibrium is reached:
\begin{align}
&\text{$t \gets \min_{s} \{ s : \Delta_s < 2c \widehat{\sigma}^2 \log p \}$} \label{eq:oo1}\\
&\text{$\widehat{S}_{t} \gets \Alg(y,X,t)$}\\
&\text{$\widehat{\sigma}_{new} \gets \| \Pi^{\perp}[\widehat{S}_{t}] y \| / \sqrt{n}$}\\
&\text{If $\widehat{\sigma}_{new} \ne \widehat{\sigma}$, then let $\widehat{\sigma} \gets \widehat{\sigma}_{new}$ and go to (\ref{eq:oo1}).} \label{eq:oo4}
\end{align}
The next theorem shows that, under an additional condition on the solution path, (\ref{eq:oo1})-(\ref{eq:oo4}) is equivalent to implementing $\Path$.
\begin{theorem}
\label{thm:scal}
Suppose $\widehat{\sigma}_0 > \widehat{\sigma}_1 > \cdots > \widehat{\sigma}_{n-1} > \widehat{\sigma}_n$, where $\widehat{\sigma}_s = \| \Pi^{\perp}[\widehat{S}_s] y \|_2^2/n$.  If $\widehat{\sigma} = \widehat{\sigma}_0 = \|y\|_2/\sqrt{n}$, then the output of $\Path$ is equal to the output of (\ref{eq:oo1})-(\ref{eq:oo4}).
\end{theorem}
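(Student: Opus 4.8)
The plan is to view both procedures as discrete recursions on the sparsity index and show they converge to the same fixed point. Write $u_s \defn \|\Pi^{\perp}[\widehat{S}_s]y\|_2^2/n$, so that, by hypothesis, $u_0 > u_1 > \cdots > u_n$ (this is the strict monotonicity of the $\widehat{\sigma}_s$, up to the obvious squared/unsquared notational wrinkle), and the stopping test in Line~5 of Algorithm~\ref{alg:path} reads $\Delta_s < 2cu_s\log p$. Hence $\Path$ outputs $\widehat{S}_{s^*}$ with $s^* \defn \min\{s : \Delta_s < 2cu_s\log p\}$, assuming, as is implicit in the statement, that this set is nonempty. For a threshold level $\tau \ge 0$ define $T(\tau) \defn \min\{s : \Delta_s < 2c\tau\log p\}$. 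The one structural fact driving everything is that $T$ is \emph{non-increasing} in $\tau$: raising the threshold can only make the inequality easier to meet at a given $s$, hence can only lower the first index at which it holds.

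Next I would recast the iteration (\ref{eq:oo1})--(\ref{eq:oo4}) as the recursion $\tau_0 = u_0$ (the initialization $\widehat{\sigma} = \|y\|_2/\sqrt{n}$), $t_{j+1} = T(\tau_j)$, $\tau_{j+1} = u_{t_{j+1}}$, which halts the first time $\tau_{j+1} = \tau_j$; by strict monotonicity of $(u_s)$ this is equivalent to $t_{j+1} = t_j$, and it is convenient to set $t_0 \defn 0$ so that $u_{t_0} = \tau_0$. I would then show by induction that $0 = t_0 \le t_1 \le t_2 \le \cdots$: the base step $t_1 \ge 0$ is trivial, and if $t_j \ge t_{j-1}$ then $\tau_j = u_{t_j} \le u_{t_{j-1}} = \tau_{j-1}$ (monotonicity of $u$), so $t_{j+1} = T(\tau_j) \ge T(\tau_{j-1}) = t_j$ (monotonicity of $T$). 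A non-decreasing integer sequence bounded by $\min\{n,p\}$ stabilizes, so the procedure halts; call its output index $t^*$. It satisfies the fixed-point relation $t^* = T(u_{t^*})$, i.e. $\Delta_{t^*} < 2cu_{t^*}\log p$ while $\Delta_s \ge 2cu_{t^*}\log p$ for every $s < t^*$.

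It remains to identify $t^*$ with $s^*$ by a sandwich. For $t^* \le s^*$ I would prove inductively that $t_j \le s^*$ for all $j$: the base case $t_0 = 0 \le s^*$ is clear, and if $t_{j-1} \le s^*$ then $\tau_{j-1} = u_{t_{j-1}} \ge u_{s^*}$, hence $2c\tau_{j-1}\log p \ge 2cu_{s^*}\log p > \Delta_{s^*}$, so $s^*$ belongs to the set defining $T(\tau_{j-1})$ and $t_j = T(\tau_{j-1}) \le s^*$. For $t^* \ge s^*$ I would argue by contradiction: if $t^* < s^*$, minimality of $s^*$ forces $\Delta_{t^*} \ge 2cu_{t^*}\log p$, contradicting the fixed-point relation $\Delta_{t^*} < 2cu_{t^*}\log p$ from the previous step. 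Therefore $t^* = s^*$, and since $\widehat{S}_s = \Alg(y,X,s)$ depends only on $s$, both procedures return $\widehat{S}_{s^*}$.

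The argument is essentially bookkeeping; its only genuine content is lining up the two monotonicities (the iterates approach $s^*$ from below precisely because the loss sequence $(u_s)$ is decreasing while $T$ is anti-monotone) and recognizing that the prescribed initialization $\widehat{\sigma} = \widehat{\sigma}_0$ -- the \emph{largest} of the $\widehat{\sigma}_s$ -- is exactly what drives the recursion onto the smallest fixed point $s^*$ rather than onto some larger fixed point of $s \mapsto T(u_s)$. The main obstacle, such as it is, is making this monotone-convergence bookkeeping airtight; the only degenerate case to dispatch is when no valid stopping index exists, in which case neither procedure terminates and the asserted equivalence is vacuous.
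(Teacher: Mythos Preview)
Your proposal is correct and follows essentially the same idea as the paper's proof: both exploit the strict monotonicity of $\widehat{\sigma}_s$ to argue that the iteration (\ref{eq:oo1})--(\ref{eq:oo4}) moves monotonically and settles at the $\Path$ stopping index. Your treatment is considerably more explicit---you formalize the map $T(\tau)$, prove the iterates are non-decreasing by induction, and close with a clean two-sided sandwich $t^*\le s^*$ and $t^*\ge s^*$---whereas the paper compresses all of this into a two-line sketch (which, incidentally, appears to have the inequality $t\le s$ reversed); the substance is the same.
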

\begin{proof}
Let $\widehat{S}$ be the output of $\Path$ and $\widehat{T}$ be the output of (\ref{eq:oo1})-(\ref{eq:oo4}).  From the properties of $\Path$, we know that $\Delta_s > 2c\widehat{\sigma}_s^2 \log n$ for all $s < |\widehat{S}|$. Furthermore, we know that $\widehat{\sigma}_1 > \widehat{\sigma}_2 > \cdots > \widehat{\sigma}_s$.  This means that $\Delta_s > 2 c \widehat{\sigma}_t^2 \log n$ for any $t \le s < |\widehat{S}|$.  Thus, (\ref{eq:oo1})--(\ref{eq:oo4}) reaches an equilibrium when $\widehat{T} = \widehat{S}$.
\end{proof}

The condition in Theorem~\ref{thm:scal} ensures that the loss decreases as the tuning parameter $s$ in $\Alg$ increases.  This condition easily holds for sparse regression algorithms, including the OMP and the FoBa, that are based on greedy methods.

Interestingly, (\ref{eq:oo1})--(\ref{eq:oo4}) resemble the scaled Lasso algorithm \cite{sun2012scaled}.  In particular, scaled Lasso starts with an initial estimate of $\sigma$, selects an appropriate tuning parameter, selects an estimate from the solution path of Lasso, and repeats until an equilibrium is reached.  Furthermore, as shown in \cite{sun2012scaled}, scaled Lasso solves the following problem:
\begin{equation}
(\widehat{\beta}, \widehat{\sigma}) = \argmin_{\beta \in \R^p,\sigma > 0} \left[\frac{\| y - X \beta\|_2^2}{2n \sigma} + \frac{\sigma}{2} + \lambda \|\beta\|_1\right] \,. \label{eq:hbl1}
\end{equation}
The theoretical results in \cite{sun2012scaled} show that, under appropriate restricted eigenvalue conditions, selecting $\lambda_0 = \sqrt{2 c \log p/n}$, where $c$ is a constant, results in accurate estimation of $\beta^*$.  This choice of $\lambda_0$ comes from the upper bound of the random variable $\|X^T w\|_{\infty}/(\sigma n)$.  The motivation for scaled Lasso grew out of the results in \cite{stadler2011} and related discussions in \cite{SunComments2010,Antoniadis2010}.  Furthermore, as shown in \cite{giraud2012high}, scaled Lasso is equivalent to square-root Lasso \cite{belloni2011square}.

$\Path$ can be seen as a solution to the more general problem
\begin{equation}
(\widehat{\beta},\widehat{\sigma}) = \argmin_{ \beta \in\R^p, \|\beta\|_0 \le s, \sigma > 0} \left[\frac{\| y - X \beta\|_2^2}{2n \sigma} + \frac{\sigma}{2} \right] \,, \label{eq:hbl0}
\end{equation}
where $\|\beta\|_0$ is the $\ell_0$-norm that counts the number of non-zero entries in $\beta$ and the parameter $s$ is implicitly related to the threshold $2c \log p$ in Line~4 of $\Path$ (Algorithm~\ref{alg:path}).  The advantage of $\Path$ is that it can be used with \textit{any} sparse regression algorithm and the choice of the threshold does not depend on the measurement matrix.


We now empirically compare scaled Lasso (SL) to $\Path$ used with Lasso (PL).
In particular, we want to compare the sensitivity of both methods to the choice of the parameter $\lambda_0 = \sqrt{2c \log p/n}$ in SL and the choice of the parameter $2c\log n$ in PL.  We consider the same setting as in Figure~\ref{fig:ill} (see Section~\ref{subsec:illus} for the details).  Figure~\ref{fig:comp} shows the box plots for the $F_1$ score and err for PL (in red) and SL (in green) as $c$ ranges from $0.05$ to $1.2$.  It is clear that there exists a parameter $c$ for both SL and PL such that their performance is nearly equal.  For example, in the first plot, the $F_1$ score of SL at $c = 0.5$ is nearly the same as the $F_1$ score of PL at $c = 1.0,1.2$.  Furthermore, it is clear from the plot that PL is relatively insensitive to the choice of $c$ when compared to SL.  This suggests the possible advantages of using $\Path$ with Lasso over the scaled Lasso estimator.

\section{Application to Real Data}
\label{sec:numsim}
\vspace{-0.3cm}

\begin{figure*}
\includegraphics[scale=0.6]{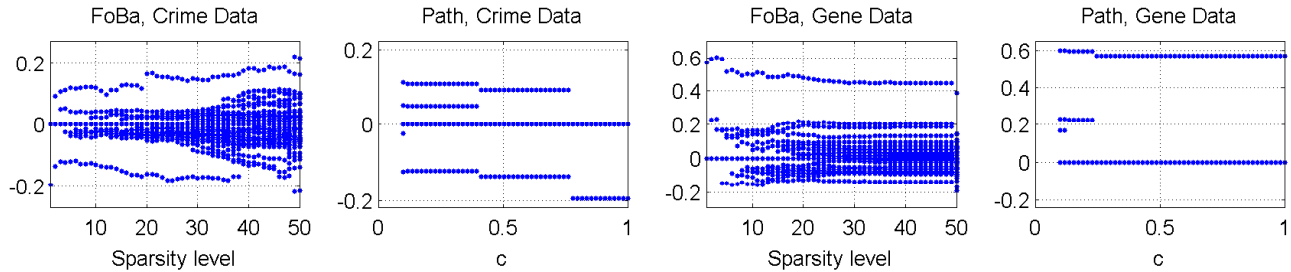}
\caption{Comparison of the solution paths of FoBa and $\Path+\!$~FoBa.  The first figure shows the solution path of FoBa applied to the crime data.  The horizontal axis specifies the sparsity level and the vertical axis specifies the coefficient values.  The second figure applies 
$\Path$ to the solution path in the first figure for $50$ different values of $c$ in the range $[0.1,1]$. $\Path$ reduces the total number of solutions from $50$ to $4$.  We observe similar trends for the gene data (last two figures).}
\vspace{-0.2cm}
\label{fig:fobareal}
\end{figure*}

In this Section, we demonstrate the advantages of using $\Path$ on real data.  Although, we have shown that $\Path$ is asymptotically tuning-free, this may not be true in finite-dimensional settings.  Instead, our main goal is to demonstrate how $\Path$ can significantly reduce the possible number of solutions for various regression problems by reparametrizing the sparse regression problem in terms of the parameter $c$ in $\Path$.
We consider the following two real data sets:
\begin{enumerate}
\item[$\bullet$] UCI Communities and Crime Data \cite{redmond2002data,BacheLichman2013}:  This data set contains information about the rate of violent crimes from $1994$ communities.  The goal is to find a sparse set of attributes, from the $122$ given attributes, that best predict the rate of violent crimes.  We removed $22$ attributes due to insufficient data, randomly selected $100$ communities, and normalized the columns to get a matrix $X \in \R^{100 \times 100}$ and observations $y \in \R^{100}$.
\item[$\bullet$] Prostate cancer data \cite{ProsData}: This data set contains information about gene expression values of $12533$ genes from $50$ patients with prostate cancer.  We consider the problem of finding relationships between the genes.  For this, we randomly select the expression values from one gene, say $y \in \R^{50}$, and regress on the remaining gene expression values $X \in \R^{50 \times 12532}$.
\end{enumerate}
Figure~\ref{fig:fobareal} presents our results on applying FoBa \cite{zhang2011adaptive} to both the data sets. 
As detailed in the caption of Figure~\ref{fig:fobareal}, $\Path$ reduces the number of solutions from $50$ to $4$ for each data set without tuning any parameter.  Subsequently, cross-validation or stability selection \cite{meinshausen2010stability} can be used to further prune the set of estimates.  This post-processing stage will require far less computations than applying cross-validation or stability selection without using $\Path$ to reduce the number of solutions.

\section{Conclusions}
\label{sec:conclude}
\vspace{-0.3cm}
We have proposed a new computationally efficient algorithm, called path thresholding ($\Path$), for selecting tuning parameters in any high-dimensional sparse regression method.  Our main contribution shows that (i) $\Path$ is agnostic to the choice of the sparse regression method, and (ii) $\Path$, under appropriate conditions, selects the optimal tuning parameter with high probability as the problem size grows large.  Thus, using $\Path$ with any tuning-dependent regression algorithm leads to an asymptotically tuning-free sparse regression algorithm.  In finite-dimensional settings, we have shown that $\Path$ can drastically reduce the possible number of solutions of a sparse regression problem.  Thus, $\Path$ can be used to significantly reduce the computational costs associated with cross-validation and stability selection.

Our work motivates several avenues of future research.  For example, it will be interesting to explore the use of $\Path$ for estimating approximately sparse vectors.  Furthermore, it will also be useful to study the use of $\Path$ in asymptotically tuning-free learning of graphical models.

\section*{Acknowledgment}
\vspace{-0.3cm}
We thank Prof. Dennis Cox and Ali Mousavi for feedback that improved the quality of the paper.  This work was supported by the Grants NSF IIS-1124535, CCF-0926127, CCF-1117939; ONR N00014-11-1-0714, N00014-10-1-0989; and ARO MURI W911NF-09-1-0383.

\appendix

\section{Proof of Theorem~\ref{thm:path}}
\label{sec:tuningfreeproof}
\vspace{-0.3cm}
We want to show that $\Path$ stops when $s = k$, where recall that $k$ is the unknown sparsity level of the sparse vector $\beta^*$ that we seek to estimate.  For this to happen, it is sufficient to find conditions under which the following two equations are true:
\begin{align}
\Delta_s &\ge \widehat{\sigma}_s \tau_p \,, \text{whenver } s < k \,, \label{eq:nostop}\\
\Delta_k &< \widehat{\sigma}_k \tau_p \,, \label{eq:stop}
\end{align}
where $\tau_p = 2c \log p$, $\widehat{\sigma}_s^2 = \| \Pi^{\perp}[\widehat{S}_s] y \|_2^2/n$, and $\Delta_s$ is defined in Line~4 of Algorithm~1.  It is clear that if (\ref{eq:nostop}) holds, then Algorithm~\ref{alg:path} will not stop for $s < k$.  Furthermore, if (\ref{eq:stop}) holds, then Algorithm~\ref{alg:path} stops when $s = k$.  The rest of the proof is centered around finding conditions under which both (\ref{eq:nostop}) and (\ref{eq:stop}) hold. 

\textbf{Finding conditions under which (\ref{eq:nostop}) holds.}  Using (\ref{eq:deltas}), we can lower bound $\Delta_s$ as follows:
\begin{align}
\Delta_s &= \!\! \max_{j \in (\widehat{S}_s)^c} \! \frac{|X_j^T \Pi^{\perp}[\widehat{S}_s] y|^2}{\|\Pi^{\perp}[\widehat{S}_s] X_j \|_2^2} 
\overset{(a)}{\ge} \frac{1}{n}\max_{j \in S^* \backslash \widehat{S}_s} {|X_j^T \Pi^{\perp}[\widehat{S}_s] y|^2} \nonumber \\
&\overset{(b)}{=} \frac{1}{n} \left\|X_{S^* \backslash \widehat{S}_s}^T \Pi^{\perp}[\widehat{S}_s] y\right\|_{\infty}^2 \nonumber \\
&\overset{(c)}{\ge} \frac{1}{k_s n} \left\|X_{S^* \backslash \widehat{S}_s}^T \Pi^{\perp}[\widehat{S}_s] y\right\|_{2}^2 \,, \quad k_s = |S^* \backslash \widehat{S}_s| \nonumber \\
&\overset{(d)}{\ge} \frac{1}{k_s n} \left[ \left\|{\cal R}\right\|_{2}^2 -
2 \left|({\cal R})^T w \right| \right] \,. \label{eq:leftf1}
\end{align}
Step~(a) restricts the size over which the maximum is taken and uses the equation $\|\Pi^{\perp}[\widehat{S}_s] X_j \|_2^2 \le n$.  Step~(b) uses the $\ell_{\infty}$-norm notation.  Step~(c) uses the fact that $\|v\|_{\infty}^2 \ge \|v\|_2^2/k_s$ for any $k_s \times 1$ vector $v$.
Step~(d) uses (\ref{eq:linmodel}) and also defines the notation
\begin{equation}
{\cal R} = X_{S^* \backslash \widehat{S}_s}^T \Pi^{\perp}[\widehat{S}_s] X_{S^* \backslash \widehat{S}_s} \beta^*_{S^* \backslash \widehat{S}_s} \,. \label{eq:calr}
\end{equation}
Next, we use (\ref{eq:linmodel}) to evaluate $\widehat{\sigma}_s^2$:
\begin{align}
n \widehat{\sigma}_s^2 = \| \Pi^{\perp}[\widehat{S}_s] y \|_2^2
&\le \| \Pi^{\perp}[\widehat{S}_s] X \beta^* \|_2^2 
+ \| \Pi^{\perp}[\widehat{S}_s] w \|_2^2 \nonumber \\
&+ 2 | (\Pi^{\perp}[\widehat{S}_s] X \beta^*)^T w | \,. \label{eq:noiseb1}
\end{align}
Using (\ref{eq:leftf1}) and (\ref{eq:noiseb1}), (\ref{eq:nostop}) holds if the following equation holds for $s < k$:
\begin{align}
&\frac{\left\|{\cal R}\right\|_{2}^2}{\tau_p k_s } 
- {\| \Pi^{\perp}[\widehat{S}_s] X \beta^* \|_2^2}  \label{eq:nostop1}\\
&\ge \frac{2 \left|({\cal R})^T w \right|} {\tau_p k_s }  
+ \| \Pi^{\perp}[\widehat{S}_s] w \|_2^2 
 + 2 | (\Pi^{\perp}[\widehat{S}_s] X \beta^*)^T w | \,. \nonumber
\end{align}
We now find a lower bound for the left hand side (LHS) of (\ref{eq:nostop1}) and an upper bound for the right hand side (RHS) of (\ref{eq:nostop1}).  Using (\ref{eq:calr}), we have
\[
\|{\cal R}\|_2^2 = \left(\beta^*_{S^* \backslash \widehat{S}_s}\right)^T \left(X_{S^* \backslash \widehat{S}_s}^T \Pi^{\perp}[\widehat{S}_s] X_{S^* \backslash \widehat{S}_s}\right)^2 \beta^*_{S^* \backslash \widehat{S}_s} \,.
\]
Let $0 < \lambda_1 \le \lambda_2 \le \ldots \le \lambda_{k_s}$ be the eigenvalues of $X_{S^* \backslash \widehat{S}_s}^T \Pi^{\perp}[\widehat{S}_s] X_{S^* \backslash \widehat{S}_s}$.  Then, for a unitary matrix $M$ and a diagonal matrix $D$, where $D_{ii} = \lambda_i$, $X_{S^* \backslash \widehat{S}_s}^T \Pi^{\perp}[\widehat{S}_s] X_{S^* \backslash \widehat{S}_s} = M^T D M$.  If $\xi = M \beta^*_{S^* \backslash \widehat{S}_s}$, then the LHS of (\ref{eq:nostop1}) can be written as
\begin{align}
\frac{ \xi^T D^2 \xi }{\tau_p k_s } - \xi^T D \xi 
&= \sum_{i=1}^{k_s} \left[ 
\frac{\lambda_i^2 }{\tau_p k_s } - \lambda_i
\right] \xi_i^2 \nonumber \\
&\ge \left[ \frac{n^2 \rho_{2k}^2 }{\tau_p k_s } - n \rho_{2k} \right] \|\beta^*_{S^* \backslash \widehat{S}_s}\|_2^2 \,,
\end{align}
where we use the assumption that $n \rho_{2k} > \tau_p k_s $ and the fact that $\lambda_1 > n \rho_{2k}$, where $\rho_{2k}$ is defined in (\ref{eq:rerhp}).

Next, we find an upper bound for the RHS in (\ref{eq:nostop1}).  To do so, we make use of standard results for Gaussian random vectors.  In particular, we have that for any vector $v$, 
\begin{align}
&\Pr(|v^T w| > \sigma \|v\|_2 \sqrt{\tau_p}) \le 2 e^{-\tau_p/2} \,, \label{eq:dd1}\\
&\Pr( \| \Pi^{\perp}[\widehat{S}_s] w\|_2^2/\sigma^2 \ge n-s + \sqrt{(n-s) \tau_p/2} +\tau_p)  \nonumber\\ 
&\hspace{5cm}\le e^{-\tau_p/2} . \label{eq:dd2}
\end{align}
Using (\ref{eq:dd1}) and (\ref{eq:dd2}), we can upper bound the RHS in (\ref{eq:nostop1}) using the following equation with probability at least $1 - 3 e^{-\tau_p/2}$:
\begin{align}
&\frac{2\sigma \|{\cal R}\|_2 \sqrt{\tau_p}}{\tau_p k_s } 
+ 2 \sigma 1\| \Pi^{\perp}[\widehat{S}_s] X \beta^*\|_2 \sqrt{\tau_p} \nonumber \\
&\qquad + \sigma^2\left( n - s + \sqrt{(n-s) \tau_p/2} +\tau_p \right) \label{eq:rft}
\end{align}
Next, note that $\|{\cal R}\|_2 \le k_s n \| \beta^*_{S^*\backslash \widehat{S}_s}\|_2$ and  
$\| \Pi^{\perp}[\widehat{S}_s] X \beta^*\|_2 \le \sqrt{nk_s} \| \beta^*_{S^*\backslash \widehat{S}_s}\|_2$.  Furthermore, choosing $\tau_p/2 < n$, (\ref{eq:rft}) can be upper bounded by
\[
\frac{2\sigma n \| \beta^*_{S^*\backslash \widehat{S}_s}\|_2 \sqrt{\tau_p} }{\tau_p}
+ 2 \sigma \sqrt{k_s n} \| \beta^*_{S^*\backslash \widehat{S}_s}\|_2 \sqrt{\tau_p}
+ 4 \sigma^2 n \,.
\]
Rearranging terms, using $\|\beta^*_{S^*\backslash \widehat{S}_s}\|_2^2 \ge k_s \beta_{\min}^2$, $n \ge \tau_p k_s$, and $k_s \le k$, (\ref{eq:nostop1}) holds with probability at least $1 - 3 e^{-\tau_p/2}$ if the following holds:
\begin{align}
n \ge \frac{k \tau_p }{\rho_{2k}}  + \frac{4 \sigma^2 \tau_p}{\beta_{\min}^2 \rho_{2k}^2} 
+ \frac{4 \sigma \sqrt{k} {\tau_p}}{\beta_{\min} \rho_{2k}^2}
\,. \label{eq:nostop2}
\end{align}

\textbf{Finding conditions under which (\ref{eq:stop}) holds.}  Assuming that $\widehat{S}_k = S^*$ and using the definition of $\Delta_k$ and $\widehat{\sigma}_k$, we can write (\ref{eq:stop}) as
\begin{align}
\max_{j \in (S^*)^c} \| P_i w \|_2^2/ \sigma^2 < \|\Pi^{\perp}[S^*] w\|_2^2 \tau_p / (n \sigma^2) \,,
\end{align}
where $P_i$ is a rank one projection matrix.  Note that $\|P_i w\|_2^2/\sigma^2$ is the square of a ${\cal N}(0,1)$ random variable.  Using the Gaussian tail ineqaulity, we have that
\[
\Pr\left( \max_{j \in (S^*)^c} \| P_i w \|_2^2/ \sigma^2 \ge \nu_p\right) \le 2(p-k) e^{-\nu_p/2}/\sqrt{\nu_p} \,.
\]
Moreover, using standard bounds for chi-square random variables, we have that
\[
\Pr\left( \|\Pi^{\perp}[S^*] w\|_2^2/\sigma^2 \le n-k - 2\sqrt{(n-k)\alpha_p}\right) \le e^{-\alpha_p} \,.
\]
Thus, (\ref{eq:stop}) holds with probability at least $1 - 3(p-k) e^{-\nu_p/2} / \sqrt{\nu_p}$ if the following is true
\[
\frac{\nu_p}{\tau_p} < 1 - \frac{k}{n} - 2\frac{\sqrt{(n-k)\nu_p/2}}{n}
\]
Let $\nu_p = 2 (1-\epsilon) c \log p$.  Then, the above conditions holds if $
\epsilon > \frac{k}{n} + \sqrt{\frac{2c \log p}{n}}$.  Since $n > 2ck \log p$, if $\epsilon > k/n + \sqrt{1/k}$, then (\ref{eq:stop}) holds with probability at least $1-3p^{1-(1-\epsilon)c}/\sqrt{2(1-\epsilon)c\log p}$.

Combining both results, and using (A5), we have that under the conditions stated in the theorem, for some constant $C > 0$, $\Pr(\widehat{S} = S^*) \ge 1 -  {C p^{1-(1-\epsilon)c}}$.


\begin{thebibliography}{10}

\bibitem{segal2003regression}
M. Segal, K. Dahlquist, and B. Conklin,
\newblock ``Regression approaches for microarray data analysis,''
\newblock {\em Journal of Computational Biology}, vol. 10, no. 6, pp. 961--980,
  2003.

\bibitem{varoquaux2012small}
G. Varoquaux, A. Gramfort, and B. Thirion,
\newblock ``Small-sample brain mapping: sparse recovery on spatially correlated
  designs with randomization and clustering,''
\newblock in {\em Proceedings of the 29th International Conference on Machine
  Learning (ICML-12)}, 2012, pp. 1375--1382.

\bibitem{CameraArray}
M.~F. Duarte, M.~A. Davenport, D. Takhar, J.~N. Laska, T. Sun, K.~F. Kelly, and
  R.~G. Baraniuk,
\newblock ``Single-pixel imaging via compressive sampling,''
\newblock {\em IEEE Signal Processing Magazine}, vol. 25, no. 2, pp. 83--91,
  Mar. 2008.

\bibitem{olshausen1997sparse}
B.~A. Olshausen and D.~J. Field,
\newblock ``Sparse coding with an overcomplete basis set: A strategy employed
  by {V}1?,''
\newblock {\em Vision research}, vol. 37, no. 23, pp. 3311--3325, 1997.

\bibitem{meinshausen2009lasso}
N. Meinshausen and B. Yu,
\newblock ``Lasso-type recovery of sparse representations for high-dimensional
  data,''
\newblock {\em Annals of Statistics}, pp. 246--270, 2009.

\bibitem{TibshiraniLasso1994}
R. Tibshirani,
\newblock ``Regression shrinkage and selection via the {L}asso,''
\newblock {\em Journal of the Royal Statistical Society, Series B}, vol. 58,
  pp. 267--288, 1996.

\bibitem{tropp2007signal}
J.~A. Tropp and A.~C. Gilbert,
\newblock ``Signal recovery from random measurements via orthogonal matching
  pursuit,''
\newblock {\em IEEE Trans. Inf. Theory}, vol. 53, no. 12, pp. 4655--4666, 2007.

\bibitem{ZouAdaptiveLasso2006}
H. Zou,
\newblock ``The adaptive {L}asso and its oracle properties,''
\newblock {\em Journal of the American Statistical Association}, vol. 101, pp.
  1418--1429, December 2006.

\bibitem{zhang2011adaptive}
T. Zhang,
\newblock ``Adaptive forward-backward greedy algorithm for learning sparse
  representations,''
\newblock {\em IEEE Trans. Inf. Theory}, vol. 57, no. 7, pp. 4689--4708, 2011.

\bibitem{VatsBaraniukNIPS2013}
D. Vats and R.~G. Baraniuk,
\newblock ``When in doubt, {SWAP}: High-dimensional sparse recovery from
  correlated measurements,''
\newblock in {\em Advances in Neural Information Processing Systems}, 2013.

\bibitem{VatsBaraniukSWAP2013}
D. Vats and R.~G. Baraniuk,
\newblock ``Swapping variables for high-dimensional sparse regression with
  correlated measurements,''
\newblock {\em arXiv preprint arXiv:1312.1706}, 2013.

\bibitem{buhlmann2011statistics}
P. B{\"u}hlmann and S. Van De~Geer,
\newblock {\em Statistics for High-Dimensional Data: Methods, Theory and
  Applications},
\newblock Springer-Verlag New York Inc, 2011.

\bibitem{schwarz1978estimating}
G. Schwarz,
\newblock ``Estimating the dimension of a model,''
\newblock {\em Annals of Statistics}, vol. 6, no. 2, pp. 461--464, 1978.

\bibitem{akaike1974new}
H. Akaike,
\newblock ``A new look at the statistical model identification,''
\newblock {\em IEEE Transactions on Automatic Control}, vol. 19, no. 6, pp.
  716--723, 1974.

\bibitem{geisser1975predictive}
S. Geisser,
\newblock ``The predictive sample reuse method with applications,''
\newblock {\em Journal of the American Statistical Association}, vol. 70, no.
  350, pp. 320--328, 1975.

\bibitem{homrighausen2013lasso}
D. Homrighausen and D. McDonald,
\newblock ``The lasso, persistence, and cross-validation,''
\newblock in {\em Proceedings of The 30th International Conference on Machine
  Learning}, 2013, pp. 1031--1039.

\bibitem{donoho1995adapting}
D.~L. Donoho and I.~M. Johnstone,
\newblock ``Adapting to unknown smoothness via wavelet shrinkage,''
\newblock {\em Journal of the American Statistical Association}, vol. 90, no.
  432, pp. 1200--1224, 1995.

\bibitem{zou2007degrees}
H. Zou, T. Hastie, and R. Tibshirani,
\newblock ``On the ``degrees of freedom" of the lasso,''
\newblock {\em Annals of Statistics}, vol. 35, no. 5, pp. 2173--2192, 2007.

\bibitem{eldar2009generalized}
Y.~C. Eldar,
\newblock ``Generalized {SURE} for exponential families: {A}pplications to
  regularization,''
\newblock {\em IEEE Transactions on Signal Processing}, vol. 57, no. 2, pp.
  471--481, 2009.

\bibitem{wasserman2009high}
L. Wasserman and K. Roeder,
\newblock ``High dimensional variable selection,''
\newblock {\em Annals of statistics}, vol. 37, no. 5A, pp. 2178, 2009.

\bibitem{ZhangMulti2010}
T. Zhang,
\newblock ``Analysis of multi-stage convex relaxation for sparse
  regularization,''
\newblock {\em Journal of Machine Learning Research}, vol. 11, pp. 1081--1107,
  Mar. 2010.

\bibitem{meinshausen2010stability}
N. Meinshausen and P. B{\"u}hlmann,
\newblock ``Stability selection,''
\newblock {\em Journal of the Royal Statistical Society: Series B (Statistical
  Methodology)}, vol. 72, no. 4, pp. 417--473, 2010.

\bibitem{belloni2011square}
A. Belloni, V. Chernozhukov, and L. Wang,
\newblock ``Square-root {L}asso: pivotal recovery of sparse signals via conic
  programming,''
\newblock {\em Biometrika}, vol. 98, no. 4, pp. 791--806, 2011.

\bibitem{stadler2011}
N. St{\"a}dler, P. B{\"u}hlmann, and S. Van De~Geer,
\newblock ``$\ell_1$-penalization for mixture regression models,''
\newblock {\em Test}, vol. 19, no. 2, pp. 209--256, 2010.

\bibitem{sun2012scaled}
T. Sun and C.-H. Zhang,
\newblock ``Scaled sparse linear regression,''
\newblock {\em Biometrika}, vol. 99, no. 4, pp. 879--898, 2012.

\bibitem{fan2008sure}
J. Fan and J. Lv,
\newblock ``Sure independence screening for ultrahigh dimensional feature
  space,''
\newblock {\em Journal of the Royal Statistical Society: Series B (Statistical
  Methodology)}, vol. 70, no. 5, pp. 849--911, 2008.

\bibitem{genovese2012comparison}
C. Genovese, J. Jin, L. Wasserman, and Z. Yao,
\newblock ``A comparison of the lasso and marginal regression,''
\newblock {\em Journal of Machine Learning Research}, vol. 13, pp. 2107--2143,
  2012.

\bibitem{needell2009cosamp}
D. Needell and J.~A. Tropp,
\newblock ``Co{S}a{M}{P}: Iterative signal recovery from incomplete and
  inaccurate samples,''
\newblock {\em Applied and Computational Harmonic Analysis}, vol. 26, no. 3,
  pp. 301--321, 2009.

\bibitem{efron2004least}
B. Efron, T. Hastie, I. Johnstone, R. Tibshirani, et~al.,
\newblock ``Least angle regression,''
\newblock {\em Annals of statistics}, vol. 32, no. 2, pp. 407--499, 2004.

\bibitem{choi2011learning}
M.~J. Choi, V.~Y. Tan, A. Anandkumar, and A.~S. Willsky,
\newblock ``Learning latent tree graphical models,''
\newblock {\em Journal of Machine Learning Research}, vol. 12, pp. 1771--1812,
  2011.

\bibitem{bickel2009simultaneous}
P.~J. Bickel, Y. Ritov, and A.~B. Tsybakov,
\newblock ``Simultaneous analysis of {L}asso and {D}antzig selector,''
\newblock {\em Annals of Statistics}, vol. 37, no. 4, pp. 1705--1732, 2009.

\bibitem{negahban2010unified}
S. Negahban, P. Ravikumar, M. Wainwright, and B. Yu,
\newblock ``A unified framework for high-dimensional analysis of $ m
  $-estimators with decomposable regularizers,''
\newblock {\em Statistical Science}, vol. 27, no. 4, pp. 538--557, 2012.

\bibitem{VershyninRMT}
R. Vershynin,
\newblock {\em Compressed Sensing: Theory and Applications}, chapter
  Introduction to the non-asymptotic analysis of random matrices,
\newblock Cambridge University Press, 2010.

\bibitem{SunComments2010}
T. Sun and C.-H. Zhang,
\newblock ``Comments on: $\ell_1$-penalization for mixture regression models,''
\newblock {\em Test}, vol. 19, no. 2, pp. 270--275, 2010.

\bibitem{Antoniadis2010}
A. Antoniadis,
\newblock ``Comments on: $\ell_1$-penalization for mixture regression models,''
\newblock {\em Test}, vol. 19, no. 2, pp. 257--258, 2010.

\bibitem{giraud2012high}
C. Giraud, S. Huet, and N. Verzelen,
\newblock ``High-dimensional regression with unknown variance,''
\newblock {\em Statistical Science}, vol. 27, no. 4, pp. 500--518, 2012.

\bibitem{redmond2002data}
M. Redmond and A. Baveja,
\newblock ``A data-driven software tool for enabling cooperative information
  sharing among police departments,''
\newblock {\em European Journal of Operational Research}, vol. 141, no. 3, pp.
  660--678, 2002.

\bibitem{BacheLichman2013}
K. Bache and M. Lichman,
\newblock ``{UCI} machine learning repository,'' 2013.

\bibitem{ProsData}
\url{http://www.biolab.si/supp/bi-cancer/projections/info/prostata.htm}, ,''
  Accessed May 31, 2013.

\end{thebibliography}
\end{document}